\documentclass[12pt]{article}

\usepackage{latexsym,amsmath,amscd,amssymb,graphics}
\usepackage{enumerate}

\usepackage{graphicx}
\usepackage{eucal}
\usepackage[colorlinks]{hyperref}
\usepackage{url}
\usepackage{framed}

\usepackage[all]{xy}

\makeatletter

\@addtoreset{figure}{section}
\def\thefigure{\thesection.\@arabic\c@figure}
\def\fps@figure{h, t}
\@addtoreset{table}{bsection}
\def\thetable{\thesection.\@arabic\c@table}
\def\fps@table{h, t}
\@addtoreset{equation}{section}

\makeatother

\textwidth 6.2 truein \oddsidemargin 0 truein \evensidemargin .2
truein \topmargin -.6 truein
\textheight 9.1 in

\newenvironment{proof}[1][Proof]{\noindent\textbf{#1.} }{\ \rule{0.5em}{0.5em}}




\begin{document}

\newtheorem{theorem}{Theorem}[section]
\newtheorem{definition}[theorem]{Definition}
\newtheorem{lemma}[theorem]{Lemma}
\newtheorem{remark}[theorem]{Remark}
\newtheorem{proposition}[theorem]{Proposition}
\newtheorem{corollary}[theorem]{Corollary}
\newtheorem{example}[theorem]{Example}

\newcommand \al{\alpha}
\newcommand\be{\beta}
\newcommand\ga{\gamma}
\newcommand\de{\delta}
\newcommand\ep{\varepsilon}
\newcommand\ze{\zeta}
\newcommand\et{\eta}
\renewcommand\th{\theta}
\newcommand\io{\iota}
\newcommand\ka{\kappa}
\newcommand\la{\lambda}
\newcommand\rh{\rho}
\newcommand\ro{\rho}
\newcommand\si{\sigma}
\newcommand\ta{\tau}
\newcommand\ph{\varphi}
\newcommand\ch{\chi}
\newcommand\ps{\psi}
\newcommand\om{\omega}
\newcommand\Ga{\Gamma}
\newcommand\De{\Delta}
\newcommand\Th{\Theta}
\newcommand\La{\Lambda}
\newcommand\Si{\Sigma}
\newcommand\Ph{\Phi}
\newcommand\Ps{\Psi}
\newcommand\Om{\Omega}

\renewcommand\L{\on{pounds}}
\newcommand\resp{resp.\ }
\newcommand\ie{i.e.\ }
\newcommand\cf{cf.\ }
\newcommand\oo{{\infty}}
\newcommand\totoc{}
\renewcommand\o{\circ}
\renewcommand\div{\on{div}}
\newcommand\x{\times}
\newcommand\on{\operatorname}
\newcommand\Ad{\on{Ad}}
\newcommand\ad{\on{ad}}
\newcommand\grad{\on{grad}}\newcommand\ver{\on{Ver}}
\newcommand\Emb{\on{Emb}}
\newcommand\ev{\on{ev}}
\newcommand\Den{\on{Den}}
\newcommand\pr{\on{pr}}
\newcommand\Aut{\mathcal{A}ut}
\newcommand\F{\mathcal{F}}
\renewcommand\H{\mathcal{H}}
\newcommand\Diff{\on{Diff}}
\newcommand\Gr{\on{Gr}}
\newcommand\hor{\on{hor}}
\newcommand\End{\on{End}}
\renewcommand\Im{\on{Im}}
\renewcommand\L{\on{pounds}}
\newcommand\Ham{\on{Ham}}\newcommand\Hom{\on{Hom}}
\newcommand\g{\mathfrak g}
\newcommand\pa{\partial}
\newcommand\h{\mathfrak h}
\newcommand\ou{\mathfrak o}
\newcommand\dd{{\mathbf d}}
\newcommand\ii{{\mathbf i}}
\newcommand\QQ{\mathbf{Q}}
\newcommand\PP{\mathbf{P}}
\renewcommand\v{\mathbf{v}}
\renewcommand\u{\mathbf{u}}
\newcommand\J{\mathbf{J}}
\newcommand\X{\mathfrak X}
\renewcommand\P{\mathcal P}
\renewcommand\O{\mathcal O}
\newcommand\A{\mathcal A}
\newcommand\ZZ{\mathbb Z}
\newcommand\RR{\mathbb R}

\newcommand{\fint}{-\!\!\!\!\!\!\int}
\def\Xint#1{\mathchoice
{\XXint\displaystyle\textstyle{#1}}%
{\XXint\textstyle\scriptstyle{#1}}%
{\XXint\scriptstyle\scriptscriptstyle{#1}}%
{\XXint\scriptscriptstyle\scriptscriptstyle{#1}}%
\!\int}
\def\XXint#1#2#3{{\setbox0=\hbox{$#1{#2#3}{\int}$ }
\vcenter{\hbox{$#2#3$ }}\kern-.5\wd0}}
\def\ddashint{\Xint=}
\def\dashint{\Xint-}

\def\below#1#2{\mathrel{\mathop{#1}\limits_{#2}}}


\title{The group structure\\ for jet bundles over Lie groups}

\author{Cornelia Vizman}

\date{  }

\maketitle

\makeatother

\maketitle

\begin{abstract}
The jet bundle $J^kG$ of $k$-jets of curves in a Lie group $G$ has a natural Lie group structure.
We present an explicit formula for the group multiplication in the right trivialization
and for the group 2-cocycle describing the abelian Lie group extension $\g\to J^{k}G\to J^{k-1}G$.
\end{abstract}

\let\thefootnote\relax\footnotetext{\textit{AMS Classification:} 58A20; 20K35; 05A18}
\let\thefootnote\relax\footnotetext{\textit{Keywords:} jet bundle, group cocycle, ordered partition, Leibniz algebra,
near-ring.}


\section{Introduction}

The jet bundles $J^kM\to M$ and higher order tangent bundles $T^kM\to M$
over a smooth manifold $M$ are examples of natural operations in the sense of \cite{KMS}.
When applied to a Lie group $G$, these constructions provide new Lie groups $J^kG$ and $T^kG$.
In this article we provide a formula for the group structure on $J^kG$ in terms of the Lie bracket
and we compare it with a similar formula for the group structure on $T^kG$ \cite{B}.
The tangent functor possesses a natural section for
$T^kG\to T^{k-1}G$, so $T^kG$ is a semidirect product of $T^{k-1}G$,
while the jet functor provides an abelian extension $J^kG\to J^{k-1}G$. 

The manifold of $k$-jets of smooth curves in a Lie group $G$ is a fiber bundle $J^kG\to G$,  
called the $k$-th order jet bundle  \cite{KMS}.
Denoting by $j^kc$ the $k$-jet at $0$ of the curve $c$,
the multiplication $(j^kc)(j^kb):=j^k(cb)$ in $J^kG$ doesn't depend on the representing curves
and defines a Lie group structure on $J^kG$.
Its Lie algebra $J^k\g$ is isomorphic to $\g\otimes_\RR\RR[X]/(X^{k+1})$,
the scalar extension of the Lie algebra $\g$ by the truncated polynomial ring $\RR[X]/(X^{k+1})$.
As a vector space it is isomorphic to $(k+1)$ copies of $\g$.

The main result of the paper is Theorem \ref{jet} which gives the  expression of the group multiplication
on the jet bundle $J^kG$ in the right trivialization \cite{GHMRV}
\begin{equation*}
j^kc\in J^kG\mapsto \left(c(0),(\de^rc)(0),(\de^rc)'(0),\dots,(\de^rc)^{(k-1)}(0)\right)\in G\x\g^k,
\end{equation*}
where $c$ is a smooth curve in $G$ with right logarithmic derivative $\de^rc=c'c^{-1}$.
One gets the multiplication law 
\begin{equation*}
\left(g,(x_n)_{1\le n\le k}\right)\left(h,(y_n)_{1\le n\le k}\right)=\left(gh,
\left(x_n+\sum_{\la\in\P_{n}}\ad_{x_{i_{\ell-1}}}\dots\ad_{x_{i_1}}\Ad_gy_{i_\ell}\right)_{1\le n\le k}\right)
\end{equation*}
and the inverse 
\begin{equation*}
\left(g,(x_n)_{1\le n\le k}\right)^{-1}=\left(g^{-1},\left( \sum_{\lambda\in\P_n} (-1)^\ell\Ad_{g^{-1}}\ad_{x_{i_1}}\dots\ad_{x_{i_{\ell-1}}}x_{i_\ell}\right)_{1\le n\le k}\right),
\end{equation*}
both involving sums over all anti-lexicographically ordered partitions 
$\la_1\cup\dots\cup\la_\ell=\{1,\dots,n\}$ 
(disjoint union) with $i_r$ the cardinality of the subset $\la_r$ for $r=1,\dots,\ell$.
The number of anti-lexicographically ordered partitions with fixed cardinalities $(i_1,\dots,i_\ell)$ is
\begin{equation}\label{comb}
N_{(i_1,\dots,i_\ell)}=\binom{i_1+\dots+i_\ell-1}{i_\ell-1}
\binom{i_1+\dots i_{\ell-1}-1}{i_{\ell-1}-1}\dots\binom{i_1+i_2-1}{i_2-1},
\end{equation}
hence the $n$-th component in the above formulas can be written respectively  as
\begin{gather*}
z_n=x_n+\sum_{i_1+\dots +i_\ell=n}N_{(i_1,\dots,i_\ell)}\ad_{x_{i_{\ell-1}}}\dots\ad_{x_{i_1}}\Ad_gy_{i_\ell}
\\
w_n=\sum_{i_1+\dots +i_\ell=n}(-1)^\ell N_{(i_1,\dots,i_\ell)} \Ad_{g^{-1}}\ad_{x_{i_1}}\dots\ad_{x_{i_{\ell-1}}}x_{i_\ell}.
\end{gather*}

The formulas involving partitions remind of the expressions for the multiplication and the inverse 
for the $k$-th order tangent group $T^kG$ in the right trivialization \cite{B}.
The reason is that the $k$-th order jet bundle $J^kG$ can be identified with the subgroup $(T^kG)^{S_k}$ of $T^kG$,
the fixed point set under the natural action of the permutation group $S_k$ 
(see Theorem \ref{first}).

All these formulas have no denominators, so one can work with geometric objects over general fields \cite{B}.
The identity fibers $J_k(\g)$ and $G_k(\g)$ of the fiber bundles $J^kG$ and $T^kG$ are polynomial groups depending only on the Lie bracket on $\g$. The dimension of $J_k(\g)$ is $k\dim\g$, while the dimension of $G_k(\g)$ is $(2^k-1)\dim\g$.
Both assignments give functorial maps 
from Lie algebras to polynomial groups, even in the more general case when $\g$  is a Leibniz algebra
\cite{D}.
Moreover, the group multiplication is in both cases affine in the second argument,
so we actually get affine near-ring structures, slight generalizations of near-ring structures
(where the group multiplication is linear in one of the arguments) \cite{P}. 

In the right trivialization, the first order jet bundle $J^1G$ coincides with the tangent bundle $TG$, so
it is the semidirect product $G\ltimes\g$
for the adjoint action. 
The second order jet bundle $J^2G$ is an abelian extension of $G\ltimes \g$ by $\g$
with characteristic group cocycle
\[
c((g,x),(h,y))=\ad_x\Ad_gy,
\]
and non-trivial Lie algebra cocycle 
\begin{equation*}
\si((\xi,x),(\et,y))=2[x,y].
\end{equation*}
This is a special case of Proposition \ref{csi}, where the characteristic cocycles 
for abelian extensions $\g\to J^kG\to J^{k-1}G$ associated
to higher order jet bundles are computed.

\paragraph{Acknowledgments} The author is grateful to Wolfgang Bertram and to the referee for extremely helpful suggestions.
This work was supported by a grant of the Romanian National Authority for Scientific Research, 
CNCS UEFISCDI, project number PN-II-ID-PCE-2011-3-0921.


\section{Group multiplication in jet bundles}

This section contains the main theorem of this paper, namely the expression of the multiplication on 
the $k$-th order jet bundle $J^kG$ 
in right trivialization. 

\begin{theorem}{\rm \cite{GHMRV}}
The right trivialization of the jet bundle $J^kG\to G$:
\begin{equation}\label{rtr}
j^kc\in J^kG\mapsto \left(c(0),(\de^rc)(0),(\de^rc)'(0),\dots,(\de^rc)^{(k-1)}(0)\right)\in G\x\g^k,
\end{equation}
where $c$ is a smooth curve in $G$ and $j^kc$ its $k$-jet at 0,
is an isomorphism of bundles,
whose inverse assigns to $(g,x_1,x_2,\dots,x_k)\in G\x\g^k$ the $k$-jet of the curve $c$ in $G$, 
uniquely defined by $c(0)=g$ and $\de^rc(t)=x'(t)$ for the Lie algebra curve 
$$
x(t)=tx_1+\frac{t^2}{2!}x_2+\dots+\frac{t^{k}}{k!}x_k.
$$
\end{theorem}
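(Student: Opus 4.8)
The plan is to present the claimed assignment as a genuine two-sided inverse of the trivialization map, and to check that both directions are smooth and fiber-preserving. Write $\Phi$ for the map \eqref{rtr} and $\Psi$ for the candidate inverse. First I would verify that $\Phi$ is well defined on jets. Since $\de^rc(t)=c'(t)c(t)^{-1}$, each derivative $(\de^rc)^{(j)}(0)$ is a fixed smooth function of $c(0),c'(0),\dots,c^{(j+1)}(0)$, obtained by differentiating the group multiplication. Hence for $j\le k-1$ the whole tuple $\Phi(j^kc)$ is determined by $c(0),\dots,c^{(k)}(0)$, that is by $j^kc$ alone, so two curves with the same $k$-jet produce the same image and $\Phi$ descends to jets.

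Next I would make $\Psi$ precise. Given $(g,x_1,\dots,x_k)$ and $x(t)=\sum_{n=1}^k\frac{t^n}{n!}x_n$, the requirement $\de^rc(t)=x'(t)$ is the time-dependent ODE $c'(t)=(R_{c(t)})_*x'(t)$ with $c(0)=g$, i.e. the reconstruction of a curve from a prescribed right logarithmic derivative. Existence and uniqueness for this equation on $G$ yields a unique local solution $c$, so its $k$-jet is well defined and I set $\Psi(g,x_1,\dots,x_k)=j^kc$. Smooth dependence on the initial point and on the parameters $x_1,\dots,x_k$ makes $\Psi$ smooth; $\Phi$ is manifestly smooth; and both cover the identity on $G$, so once $\Phi$ and $\Psi$ are shown to be mutually inverse we obtain an isomorphism of bundles.

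The identity $\Phi\circ\Psi=\on{id}$ is immediate: the solution $c$ just built satisfies $\de^rc=x'$ exactly, whence $(\de^rc)^{(j)}(0)=(x')^{(j)}(0)=x_{j+1}$ for $0\le j\le k-1$ together with $c(0)=g$, returning the original tuple. For $\Psi\circ\Phi=\on{id}$, start from $j^kc$; the tuple $\Phi(j^kc)$ has entries $x_n=(\de^rc)^{(n-1)}(0)$, and feeding it into $\Psi$ gives a curve $\tilde c$ with $\tilde c(0)=c(0)$ and $\de^r\tilde c=x'$. Since $(x')^{(j)}(0)=x_{j+1}=(\de^rc)^{(j)}(0)$ for $j\le k-1$, the curves $\de^r\tilde c$ and $\de^rc$ have the same $(k-1)$-jet at $0$.

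The crux, and the only place where a genuine argument is needed, is the claim that $j^kc$ is determined by the pair $\bigl(c(0),j^{k-1}(\de^rc)\bigr)$; granting it, $\tilde c$ and $c$ share the same $k$-jet and $\Psi\circ\Phi=\on{id}$ follows. To prove the claim I would differentiate the defining relation $c'(t)=(R_{c(t)})_*(\de^rc)(t)$ repeatedly at $t=0$: for $1\le j\le k$ this expresses $c^{(j)}(0)$ as a function of the lower derivatives $c(0),\dots,c^{(j-1)}(0)$ and of $(\de^rc)(0),\dots,(\de^rc)^{(j-1)}(0)$, the highest logarithmic derivative entering linearly through $(R_{c(0)})_*$. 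An induction then shows $c^{(j)}(0)$ depends only on $c(0)$ and on $(\de^rc)^{(i)}(0)$ for $i\le j-1$, so for $j\le k$ only on $c(0)$ and $j^{k-1}(\de^rc)$. This triangular recursion is elementary, so the real content of the theorem lies in recognizing that the right logarithmic derivative linearizes the jet data in exactly this invertible way.
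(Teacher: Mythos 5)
Your argument is correct and complete: well-definedness of the map on $k$-jets, reconstruction of $c$ from $c(0)$ and $\de^rc=x'$ by the ODE $c'(t)=(R_{c(t)})_*x'(t)$, and the triangular recursion showing that $c^{(j)}(0)$ is determined by $c(0)$ and $(\de^rc)^{(i)}(0)$ for $i\le j-1$, which is the one step that genuinely needs proving. Note that the paper itself offers no proof of this theorem --- it is quoted from \cite{GHMRV} --- so there is nothing to compare against; your write-up is the standard self-contained argument (with the implicit assumption, harmless for finite-dimensional $G$, that curves in $\g$ integrate to curves in $G$).
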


\begin{remark}
{\rm When $k>2$, the $k$-jet $j^kc$ with right trivialization $(g,x_1,\dots, x_k)$
doesn't coincide in general with the $k$-jet $j^kb$ of the curve $b(t)=(\exp x(t))g$, 
where $x$ is the Lie algebra curve defined above.
Indeed, the right logarithmic derivative of $b$ is 
$$
\de^rb(t)=\de^r\exp x(t)=\sum_{j=0}^\oo\frac{1}{(j+1)!}\ad^j_{x(t)}x'(t),
$$
so $\de^rb(0)=x_1$, $(\de^rb)'(0)=x_2$, but the higher order derivatives get additional terms $(\de^rb)''(0)=x_3+\frac12\ad_{x_1}x_2$,
 $(\de^rb)'''(0)=x_4+\ad_{x_1}x_3+\frac12\ad^2_{x_1}x_2$.}
\end{remark}

The multiplication $(j^kc)(j^kb):=j^k(cb)$ doesn't depend on the representing curves
and defines a Lie group structure on the $k$-th order jet bundle $J^kG\to G$.
Denoting  the identity fiber by $J_k(\g)$,
the Lie group $J^kG$ is a semidirect product of $G$ and $J_k(\g)$.
 
Let us introduce some notation needed for expressing the group multiplication on $J^kG$ in the right trivialization.

\begin{definition}
{\rm Let $\P_n$ denote the set of all partitions $\la=\la_1|\dots |\la_\ell$ of $\{1,\dots ,n\}$,
which means that $\{1,2,\dots,n\}=\la_1\cup\dots\cup\la_\ell$ disjoint union of sets.
We call $\ell(\la)=\ell$ the length of the partition.
We order each partition anti-lexicographically: the ordering is done from right to left,
always choosing the subset that contains the highest available number. 
For $n=3$ there are 5 such anti-lexicographically ordered partitions $\P_3=\{1|2|3,12|3,2|13,1|23,123\}$.

The cardinality of $\la_r$ is denoted by $i_r=|\la_r|$.
Of course $n=i_1+\dots+i_\ell$, so each element $\la\in\P_n$ determines an ordered decomposition of the number $n$.
There are elements in $\P_n$ that determine the same ordered decomposition of $n$, 
e.g. $2|13$ and $1|23$ both determine the decomposition $3=1+2$.
}
\end{definition}

\begin{lemma}\label{number}
The number of anti-lexicographically ordered partitions $\la=\la_1|\dots |\la_\ell$ of $\{1,\dots ,n\}$ 
with fixed cardinalities $i_1,\dots,i_\ell$ is $N_{(i_1,\dots,i_\ell)}$ from \eqref{comb}.
\end{lemma}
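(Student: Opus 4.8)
The plan is to reinterpret the anti-lexicographic ordering as a condition on block maxima and then set up a recursion on the length $\ell$ by peeling off the last block.

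First I would reformulate the ordering rule. I claim that $\la_1|\dots|\la_\ell$ is anti-lexicographically ordered exactly when $\max\la_1<\max\la_2<\dots<\max\la_\ell$. The rule of repeatedly taking, from the right, the block containing the largest unused number forces $n\in\la_\ell$, then the largest remaining element into $\la_{\ell-1}$, and so on; equivalently $\max(\la_1\cup\dots\cup\la_r)\in\la_r$ for all $r$, which is precisely the increasing-maxima condition. Since every set partition has a unique such ordering (sort its blocks by their maxima), counting anti-lexicographically ordered partitions with prescribed cardinalities $(i_1,\dots,i_\ell)$ amounts to counting ordered block sequences of those sizes with strictly increasing maxima.

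Next I would exploit that the global maximum $n=i_1+\dots+i_\ell$ must sit in the last block $\la_\ell$. The increasing-maxima condition imposes nothing more on $\la_\ell$ than $n\in\la_\ell$, so the remaining $i_\ell-1$ entries of $\la_\ell$ form an arbitrary subset of $\{1,\dots,n-1\}$, contributing a factor $\binom{n-1}{i_\ell-1}=\binom{i_1+\dots+i_\ell-1}{i_\ell-1}$. The leftover $n-i_\ell$ elements must then fill $\la_1,\dots,\la_{\ell-1}$ with the prescribed sizes and strictly increasing maxima. The key observation, and the step I expect to require the most care, is that the number of such arrangements depends only on how many leftover elements there are and on the sizes $i_1,\dots,i_{\ell-1}$, not on their actual values in $\{1,\dots,n-1\}$: the unique order-preserving bijection onto $\{1,\dots,n-i_\ell\}$ carries admissible sequences to admissible sequences bijectively, preserving all block sizes. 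Hence this count is exactly $N_{(i_1,\dots,i_{\ell-1})}$, yielding the recursion
\[
N_{(i_1,\dots,i_\ell)}=\binom{i_1+\dots+i_\ell-1}{i_\ell-1}\,N_{(i_1,\dots,i_{\ell-1})}.
\]

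Finally I would conclude by induction on $\ell$. The base case $\ell=1$ gives $N_{(i_1)}=1$, matching the empty product, and unrolling the recursion reproduces the product of binomial coefficients in \eqref{comb}. Aside from the relabeling invariance noted above, every step is a direct bookkeeping of choices, so I expect that to be the only substantive point.
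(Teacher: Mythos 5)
Your proof is correct and follows essentially the same route as the paper: both arguments peel off the last block first (which must contain $n$, with the other $i_\ell-1$ elements chosen freely) and then iterate over the remaining blocks, each forced to contain the largest remaining element. The only difference is presentational --- you package the iteration as a recursion with an order-preserving relabeling and an induction on $\ell$, whereas the paper directly multiplies the successive binomial counts.
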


\begin{proof}
Because of the anti-lexicographic ordering, the subset $\la_\ell$ must contain the element $n$, 
while the other $i_\ell-1$ elements can be chosen in
$\binom{n-1}{i_\ell-1}$ ways. The subset $\la_{\ell-1}$ must contain the biggest of the remaining elements,
so we have $\binom{n-i_\ell-1}{i_{\ell-1}-1}$ choices. Similarly, we get 
$\binom{n-i_\ell-\dots-i_{r+1}-1}{i_r-1}$ choices for the elements of $\la_r$, $r=1,\dots,\ell$.
Their product gives the expression of $N_{(i_1,\dots,i_\ell)}$ from \eqref{comb} because  of the identity 
$i_1+\dots+i_\ell=n$.
\end{proof}

\begin{remark}\label{back}
{\rm 
From each anti-lexicographically ordered partition $\la=\la_1|\dots|\la_\ell\in\P_{n}$ 
one derives new anti-lexicographically ordered partitions 
$$
\la^{[m]}\in\P_{n+1},\quad m={0,\dots,\ell}
$$ 
by the following procedure: first we increase by 1 each element of $\la$, 
thus getting a partition  of the set $\{2,\dots,n+1\}$, which we denote $\la^+$,
then we adjoin the element 1 to the partition $\la^+$.
There are $m+1$ different ways to do this, namely
by placing the element $1$ in the $m$-th subset of $\la^{+}$ one obtains
the partition $\la^{[m]}$ of length $\ell$,
while by placing the element $1$ in a separate subset (in front),
one obtains the partition $\la^{[0]}$ of length $\ell+1$.
In all cases, the new partition is again anti-lexicographically ordered.
For instance, the three derived partitions of $\la=2|13$ are $\la^{[0]}=1|3|24$, $\la^{[1]}=3|124$, and $\la^{[2]}=13|24$.

Each anti-lexicographically ordered partition in $\P_{n+1}$ is a derived partition 
for a unique anti-lexicographically ordered partition in $\P_n$, 
namely the one obtained by removing the element 1
and lowering each remaining element by 1,
e.g. $2|13=\la^{[2]}\in\P_3$ for $\la=1|2\in\P_2$.}
\end{remark}

\begin{theorem}\label{jet}
The multiplication law in $J^kG$, identified with $G\x\g^k$ 
in the right trivialization \eqref{rtr}, is
$(g,x_1,\dots,x_k)(h,y_1,\dots,y_k)=(gh,z_1,\dots,z_k)$,
where each $z_n\in\g$ is given by
\begin{align}\label{zet}
z_n&=x_n+\sum_{\la\in\P_{n}}\ad_{x_{i_{\ell-1}}}\dots\ad_{x_{i_1}}\Ad_gy_{i_\ell}
\nonumber\\&=x_n+\sum_{\ell=1}^n
\sum_{i_1+\dots +i_\ell=n}N_{(i_1,\dots,i_\ell)}\ad_{x_{i_{\ell-1}}}\dots\ad_{x_{i_1}}\Ad_gy_{i_\ell},
\end{align}
and the inverse in $J^kG$ is
$(g,x_1,\dots,x_k)^{-1}=(g^{-1},w_1,\dots,w_k)$, where each $w_n\in\g$ is given by
\begin{align}\label{wet}
w_n &= \sum_{\lambda\in\P_n} (-1)^\ell\Ad_{g^{-1}}\ad_{x_{i_1}}\dots\ad_{x_{i_{\ell-1}}}x_{i_\ell}\nonumber
\\&=\sum_{\ell=1}^n\sum_{i_1+\dots +i_\ell=n}(-1)^\ell N_{(i_1,\dots,i_\ell)} \Ad_{g^{-1}}\ad_{x_{i_1}}\dots\ad_{x_{i_{\ell-1}}}x_{i_\ell}.
\end{align}
$\P_n$ denotes the set of anti-lexicographically ordered partitions
$\la=\la_1|\dots|\la_\ell$ of $\{1,\dots,n\}$, $i_r$ denotes the cardinality of $\la_r$, and
$N_{(i_1,\dots,i_\ell)}$ is given by \eqref{comb}.
\end{theorem}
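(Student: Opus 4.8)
The plan is to reduce the statement to a computation of Taylor coefficients of the right logarithmic derivative of a product of curves, and then to organize that computation by an induction keyed to the derived-partition operation of Remark \ref{back}.

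First I would record the two elementary identities on which everything rests. For smooth curves $c,b$ in $G$ one has the product rule
\[
\de^r(cb)=\de^rc+\Ad_c\,\de^rb,
\]
obtained directly from $(cb)'(cb)^{-1}=c'c^{-1}+c(b'b^{-1})c^{-1}$, together with the differential identity
\[
\tfrac{d}{dt}\Ad_{c(t)}=\ad_{\de^rc(t)}\Ad_{c(t)},
\]
which expresses that $\Ad$ intertwines the right logarithmic derivative with $\ad$. Writing $u=\de^rc$, $v=\de^rb$, $A(t)=\Ad_{c(t)}$, so that $x_n=u^{(n-1)}(0)$, $y_n=v^{(n-1)}(0)$, $A(0)=\Ad_g$, and $A'=\ad_uA$, the product $j^k(cb)$ records the derivatives of $\de^r(cb)=u+Av$ at $0$. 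Since $u$ contributes only $x_n$ to the $n$-th slot, the content of the theorem is the identity
\[
F^{(n-1)}(0)=\sum_{\la\in\P_n}\ad_{x_{i_{\ell-1}}}\dots\ad_{x_{i_1}}\Ad_g\,y_{i_\ell},\qquad F(t)=A(t)v(t).
\]

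I would prove this by induction on $n$ in the stronger $t$-dependent form $F^{(n-1)}(t)=\Phi_n(t)$, where
\[
\Phi_n(t)=\sum_{\la\in\P_n}\ad_{u^{(i_{\ell-1}-1)}(t)}\dots\ad_{u^{(i_1-1)}(t)}\,A(t)\,v^{(i_\ell-1)}(t).
\]
The base case $n=1$ is $F=Av$. For the inductive step I would differentiate a single summand $T_\la(t)$ by the Leibniz rule and read off its factors. Differentiating the terminal factor $v^{(i_\ell-1)}$ raises the $y$-index, i.e.\ enlarges the last block $\la_\ell$ by one; differentiating an inner factor $\ad_{u^{(i_r-1)}}$ raises its index, i.e.\ enlarges the block $\la_r$ by one; and differentiating the middle factor $A$ produces, via $A'=\ad_uA$, a brand new factor $\ad_{u^{(0)}}$ inserted just left of $A$, i.e.\ a new singleton block placed in front. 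These $\ell+1$ operations are exactly the passages $\la\mapsto\la^{[m]}$, $m=0,\dots,\ell$, of Remark \ref{back}, and the cardinality bookkeeping matches term by term. Since every element of $\P_{n+1}$ is a derived partition of a unique element of $\P_n$, summing over $\la$ gives $\Phi_n'=\Phi_{n+1}$, closing the induction; evaluating at $t=0$ yields \eqref{zet}.

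For the inverse I would argue in parallel, using that the inverse of $j^kc$ is $j^k(c^{-1})$ and that the inverse curve has $\de^r(c^{-1})=-\Ad_{c^{-1}}\de^rc=-B(t)u(t)$ with $B(t)=A(t)^{-1}=\Ad_{c(t)^{-1}}$, satisfying $B'=-B\,\ad_u$. The analogous quantity to establish is, in $t$-dependent form,
\[
\tfrac{d^{n-1}}{dt^{n-1}}\bigl(-B(t)u(t)\bigr)=\sum_{\la\in\P_n}(-1)^{\ell}\,B(t)\,\ad_{u^{(i_1-1)}(t)}\dots\ad_{u^{(i_{\ell-1}-1)}(t)}\,u^{(i_\ell-1)}(t).
\]
The very same induction applies: differentiating $B$ now inserts $-\ad_{u^{(0)}}$ just right of $B$, supplying both the new front singleton and the extra sign $(-1)$ accompanying the length increase, while differentiating the inner $\ad$-factors or the terminal $u^{(i_\ell-1)}$ enlarges the corresponding block and preserves the sign; this again reproduces exactly the $\ell+1$ derived partitions. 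Evaluating at $t=0$ gives \eqref{wet}. Finally, in both formulas the summand depends only on the cardinality vector $(i_1,\dots,i_\ell)$, so collecting the $N_{(i_1,\dots,i_\ell)}$ partitions with fixed cardinalities (Lemma \ref{number}) produces the second, closed-form lines of \eqref{zet} and \eqref{wet}.

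The main obstacle I anticipate is the precise matching in the inductive step: one must verify that the Leibniz rule applied to $T_\la$ reproduces the derived-partition operation of Remark \ref{back} faithfully, including that $A'=\ad_uA$ (respectively $B'=-B\,\ad_u$) creates the new singleton block in the correct position and with cardinality $1$, and that the anti-lexicographic ordering is preserved throughout. Once this bijection between differentiations and derived partitions is in place, both formulas follow with no denominators, the only analytic inputs being the two first-order identities above.
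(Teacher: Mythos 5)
Your proposal is correct and follows essentially the same route as the paper: the paper likewise reduces to differentiating $\de^r(cb)=x'+\Ad_c y'$, assigns to each partition $\la$ the curve $F_\la(t)=\ad_{x^{(i_{\ell-1})}}\dots\ad_{x^{(i_1)}}\Ad_{c}y^{(i_\ell)}$ (your $\Phi_n$ is exactly $\sum_\la F_\la$ up to the notational shift $u^{(i-1)}=x^{(i)}$), proves $F_\la'=\sum_{m=0}^{\ell}F_{\la^{[m]}}$ via the derived-partition bijection of Remark \ref{back}, and handles the inverse with the same induction applied to $-\Ad_{c^{-1}}x'$. No gaps; the bookkeeping you flag as the main obstacle is precisely what the paper's displayed computation of $F_\la'$ (and of $E_\la'$) verifies.
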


\begin{proof}
In order to prove 
the multiplication formula \eqref{zet} we consider the curves $x$ and $y$ in the Lie algebra $\g$,  
\begin{align*}
x(t)=tx_1+\frac{t^2}{2!}x_2+\dots+\frac{t^{k}}{k!}x_k,\text{ resp. }y(t)=ty_1+\frac{t^2}{2!}y_2+\dots+\frac{t^{k}}{k!}y_k,
\end{align*}
and the curves $c$ and $b$ in the Lie group $G$, uniquely defined by $\de^rc=x'$ and $c(0)=g$, 
\resp $\de^rb=y'$ and $b(0)=h$.
 In the right trivialization \eqref{rtr} we identify
$j^kc=(g,x'(0),\dots,x^{(k)}(0))=(g,x_1,\dots,x_k)$ \resp $j^kb=(h,y_1,\dots,y_k)$, 
so the multiplication is
\[
(g,x_1,\dots,x_k)(h,y_1,\dots,y_k)=j^k(cb)=(gh,z'(0),z''(0),\dots,z^{(k)}(0))
\]
for $z'=\de^r(cb)=\de^rc+\Ad_c\de^rb=x'+\Ad_cy'$. 

It remains to prove that $z_n=z^{(n)}(0)$.
To each anti-lexicographically ordered partition $\la\in\P_n$ with $|\la_r|=i_r$ we assign a curve $F_\la$ in $\g$ defined by
\[
F_\la(t):=\ad_{x^{(i_{\ell-1})}(t)}\dots\ad_{x^{(i_{1})}(t)}\Ad_{c(t)}y^{(i_\ell)}(t).
\]
Its derivative can be written as a sum of  terms of the same type:
\begin{align}\label{fla}
F_\la'&=\left(\ad_{x^{(i_{\ell-1})}}\dots\ad_{x^{(i_{1})}}\Ad_{c}y^{(i_\ell)}\right)'
=\ad_{x^{(i_{\ell-1})}}\dots\ad_{x^{(i_{1})}}\ad_{x'}\Ad_{c}y^{(i_\ell)}\nonumber\\
&+\sum_{m=1}^{\ell-1}\ad_{x^{(i_{\ell-1})}}\dots\ad_{x^{(i_m+1)}}\dots\ad_{x^{(i_{1})}}\Ad_{c}y^{(i_\ell)}
+\ad_{x^{(i_{\ell-1})}}\dots\ad_{x^{(i_{1})}}\Ad_{c}y^{(i_\ell+1)},
\end{align}
using at step two the fact that 
$$(\Ad_cy^{(i)})'=\ad_{\de^rc}\Ad_cy^{(i)}+\Ad_cy^{(i+1)}=\ad_{x'}\Ad_cy^{(i)}+\Ad_cy^{(i+1)}.$$

Since the ordered decomposition of the number $n$ 
induced by the anti-lexicographic ordered partition $\la\in\P_n$ is $n=i_1+\dots+i_\ell$,
the ordered decompositions of $n+1$ induced by the derived partitions 
$\la^{[0]},\dots,\la^{[m]},\dots,\la^{[\ell]}\in\P_{n+1}$
are $n+1=1+i_1+\dots+i_\ell$, $\dots,n+1=i_1+\dots+(i_m+1)+\dots+i_\ell$, ..., $n+1=i_1+\dots+(i_\ell+1)$.
This ensures that each term in the expression \eqref{fla} of $F'_\la$ corresponds to one of the derived partitions of $\la$, so
\begin{align*}
F_\la'(t)&=\sum_{m=0}^{\ell}F_{\la^{[m]}}(t).
\end{align*}

We are now ready to compute the higher order derivatives of $z'=x'+\Ad_cy'$. We will prove by induction that:
\begin{equation}\label{zen}
z^{(n)}(t)=x^{(n)}(t)+\sum_{\la\in\P_{n}}F_\la(t).
\end{equation}
For $n=2$ the identity holds because $z''=(x'+\Ad_cy')'=x''+\ad_{x'}\Ad_cy'+\Ad_cy''$ and $\P_2$ consists of only two partitions
$1|2$ and $12$.
Assuming that \eqref{zen} holds for $n-1$, we compute $z^{(n)}$:
\begin{align*}
z^{(n)}&
=x^{(n)}+\sum_{\la\in\P_{n-1}}F_\la'
=x^{(n)}+\sum_{\la\in\P_{n-1}}\sum_{m=0}^{|\la|}F_{\la^{[m]}}=x^{(n)}+\sum_{\rho\in\P_{n}}F_\rho.
\end{align*}
In the last step we use Remark \ref{back}: each element $\rho\in\P_{n}$ is obtained from a unique 
element $\la\in\P_{n-1}$ through derivation.
This ends the proof of  \eqref{zen} by induction, and the first part of \eqref{zet} 
follows by evaluation at 0. For the second part of \eqref{zet} we apply Lemma \ref{number}.

To show the inversion formula \eqref{wet}, we consider again a Lie algebra curve $x(t)=tx_1+\frac{t^2}{2!}x_2+\dots+\frac{t^{k}}{k!}x_k$,
and the Lie group curve $c$ uniquely defined by $\de^rc=x'$ and $c(0)=g$.
In the right trivialization \eqref{rtr} we have
$j^kc=(g,x_1,\dots,x_k)$, so
\[
(g,x_1,\dots,x_k)^{-1}=j^k(c^{-1})=(g^{-1},w'(0),w''(0),\dots,w^{(k)}(0))
\]
for $w'=\de^r(c^{-1})=-\Ad_{c^{-1}}\de^rc=-\Ad_{c^{-1}}x'$. 
We show by induction that 
\begin{equation}\label{wen}
w^{(n)}=\sum_{\la\in\P_n}(-1)^\ell\Ad_{c^{-1}}\ad_{x^{(i_1)}}\dots\ad_{x^{(i_{\ell-1})}}x^{(i_\ell)}.
\end{equation}
We denote by $E_\la$ the Lie algebra curve $(-1)^\ell\Ad_{c^{-1}}\ad_{x^{(i_1)}}\dots\ad_{x^{(i_{\ell-1})}}x^{(i_\ell)}$.
For $n=2$ we compute $w''=-(\Ad_{c^{-1}}x')'=-\Ad_{c^{-1}}x''+\Ad_{c^{-1}}\ad_{x'}x'=E_{12}+E_{1|2}$.
Assuming that \eqref{wen} holds for $n-1$, we compute:
\begin{align*}
w^{(n)}&
=\sum_{\la\in\P_{n-1}}E_\la'
=\sum_{\la\in\P_{n-1}}\sum_{m=0}^{|\la|}E_{\la^{[m]}}=\sum_{\rho\in\P_{n}}E_\rho.
\end{align*}
where we use at step two the identity
\begin{align*}
(E_\la)'&=\left((-1)^\ell\Ad_{c^{-1}}\ad_{x^{(i_1)}}\dots\ad_{x^{(i_{\ell-1})}}x^{(i_\ell)}\right)'
=(-1)^{\ell+1}\Ad_{c^{-1}}\ad_{x'}\ad_{x^{(i_1)}}\dots\ad_{x^{(i_{\ell-1})}}x^{(i_\ell)}\\
&+\sum_{m=1}^{\ell}(-1)^{\ell}\Ad_{c^{-1}}\ad_{x^{(i_1)}}\dots\ad_{x^{(i_m+1)}}\dots\ad_{x^{(i_{\ell-1})}}x^{(i_\ell)}
=\sum_{m=0}^{\ell}E_{\la^{[m]}}.
\end{align*}
This ends the proof of \eqref{wen} by induction.

Evaluation at 0 gives us the first part of \eqref{wet}, while the second part of \eqref{wet} 
follows immediately with Lemma \ref{number}.
\end{proof}

\begin{example}
For convenience of the reader we expand the multiplication in the identity fiber $J_4(\g)$ of $J^4G$:
\begin{align*}
(x_1,x_2,x_3,x_4)(y_1,y_2,y_3,y_4)
=(&x_1+y_1,x_2+y_2+\ad_{x_1}y_1,\\
&x_3+y_3+2\ad_{x_1}y_2+\ad_{x_2}y_1+\ad_{x_1}^2y_1,\\
&x_4+y_4+3\ad_{x_1}y_3+3\ad_{x_2}y_2+3\ad_{x_1}^2y_2\\
&+\ad_{x_3}y_1+2\ad_{x_2}\ad_{x_1}y_1+\ad_{x_1}\ad_{x_2}y_1+\ad_{x_1}^3y_1).
\end{align*}
\end{example}

\begin{remark}\label{leftrem}
{\rm
The counterpart of \eqref{zet} and \eqref{wet} in the left trivialization of $J^kG$ 
(the bijection \eqref{rtr} with the right logarithmic derivative $\de^rc$ 
replaced by the left logarithmic derivative $\de^lc=c^{-1}c'$) 
are
\begin{equation*}
z_n
=y_n+\sum_{i_1+\dots+i_\ell={n}}(-1)^{\ell-1}N_{(i_1,\dots,i_\ell)}\ad_{y_{i_{\ell-1}}}\dots\ad_{y_{i_1}}\Ad_{h^{-1}}x_{i_\ell},
\end{equation*}
and 
\begin{equation*}
w_n
=-\sum_{i_1+\dots+i_\ell={n}}N_{(i_1,\dots,i_\ell)}\Ad_{g}\ad_{x_{i_1}}\dots\ad_{x_{i_{\ell-1}}}x_{i_\ell}.
\end{equation*}
}
\end{remark}

A {\it pure element} of $J^kG$ is a $k$-jet whose expression $(g,x_i)$ in the right trivialization
contains a single non-zero Lie algebra element.
The inverse of the pure element $(e,x_i)$ is $(e,-x_i)$.
The following identity concerning multiplication of pure elements was already obtained in \cite{B}
\[
(e,x_1)(e,y_1)=(e,x_1+y_1,\ad_{x_1}y_1,\ad_{x_1}^2y_1,\dots,\ad_{x_1}^{k-1}y_1).
\]
It is not hard to see that the multiplication with a pure element $(e,y_i)$, where $i>1$, gives
\begin{align*}
(e,x_1)(e,y_i)=\left(e,x_1,y_i,\binom{i}{1}\ad_{x_1}y_i,\binom{i+1}{2}\ad_{x_1}^2y_i,\dots,\binom{k-1}{k-i}\ad_{x_1}^{k-i}y_i\right).
\end{align*}
The multiplication of two arbitrary pure elements is the content of the following corollary.

\begin{corollary}\label{pure}
In the right trivialized jet bundle $J^kG$, the product of two pure elements $(e,x_i)$ and $(e,y_j)$ with $1\le i<j\le k$ is
\[
(e,x_i)(e,y_j)=(e,x_i,y_j,\frac{(i+j-1)!}{i!(j-1)!}\ad_{x_i}y_j,\dots,
\frac{(ni+j-1)!}{n!(i!)^n(j-1)!}\ad_{x_i}^{n}y_j),
\]
where $n$ is the biggest natural number that satisfies $ni+j\le k$. 
The components known to be zero were not written.
Similar formulas hold for $i= j$ and $i>j$.
\end{corollary}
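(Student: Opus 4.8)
The plan is to specialize the multiplication formula \eqref{zet} of Theorem \ref{jet} to the two pure arguments and then evaluate the surviving combinatorial coefficients. In the right trivialization the pure element $(e,x_i)$ has coordinate vector whose only nonzero entry is the $i$-th, equal to $x_i$, and likewise the only nonzero entry of $(e,y_j)$ is the $j$-th, equal to $y_j$. Since $g=h=e$ we have $\Ad_g=\mathrm{id}$, so in \eqref{zet} the free term $x_n$ equals $x_i$ when $n=i$ and vanishes otherwise, while the sum reduces to $\sum_{\la\in\P_n}\ad_{x_{i_{\ell-1}}}\dots\ad_{x_{i_1}}y_{i_\ell}$; I then only need to track which of these terms do not vanish.

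First I would determine the contributing partitions. A summand indexed by $\la=\la_1|\dots|\la_\ell\in\P_n$ with cardinalities $(i_1,\dots,i_\ell)$ is nonzero only when $y_{i_\ell}\ne0$, forcing $i_\ell=j$, and when $x_{i_r}\ne0$ for every $r<\ell$, forcing $i_1=\dots=i_{\ell-1}=i$. Setting $m=\ell-1$, the cardinality sequence is therefore pinned to $(i,\dots,i,j)$ with $m$ copies of $i$, which is realizable exactly when $n=mi+j$; the surviving summand is then $N_{(i,\dots,i,j)}\,\ad_{x_i}^{m}y_j$. The two boundary contributions are the single-block partition ($\ell=1$), which yields $z_j=y_j$, and the free coordinate term, which yields $z_i=x_i$; because $1\le i<j$ the slots $i,\;j,\;i+j,\;2i+j,\dots$ are pairwise distinct, so these do not collide and all remaining components vanish.

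The one genuine calculation is the evaluation of $N_{(i,\dots,i,j)}$. Inserting the sequence $(i,\dots,i,j)$ into \eqref{comb} gives $N_{(i,\dots,i,j)}=\binom{mi+j-1}{j-1}\prod_{r=2}^{m}\binom{ri-1}{i-1}$, and I would establish the telescoping identity $\prod_{r=2}^{m}\binom{ri-1}{i-1}=\frac{(mi)!}{m!\,(i!)^{m}}$ by a short induction on $m$; together with $\binom{mi+j-1}{j-1}=\frac{(mi+j-1)!}{(j-1)!\,(mi)!}$ this produces the asserted coefficient $\frac{(mi+j-1)!}{m!\,(i!)^{m}(j-1)!}$. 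A cleaner alternative is to read Lemma \ref{number} combinatorially: $N_{(i,\dots,i,j)}$ counts the set partitions of $\{1,\dots,mi+j\}$ into $m$ blocks of size $i$ and one block of size $j$ whose largest element lies in the size-$j$ block, and there are $\binom{mi+j-1}{j-1}\cdot\frac{(mi)!}{m!\,(i!)^{m}}$ of these. Letting $m$ run over $1\le m\le n$ with $ni+j\le k$ completes the case $i<j$.

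For the remaining cases I would only record the modifications. When $i=j$ the same argument forces all blocks to have size $i$, so the nonzero components sit at the multiples $\ell i$ with coefficient $N_{(i,\dots,i)}=\frac{(\ell i)!}{\ell!\,(i!)^{\ell}}$, and the free term now adds $y_i$ to give $x_i+y_i$ in slot $i$. When $i>j$ the coefficients are identical, but $y_j$ occupies the lower slot $j<i$, so the component list is simply reordered. I expect the telescoping binomial identity to be the only step needing care; the rest is a direct reading of Theorem \ref{jet}.
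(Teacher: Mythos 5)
Your proposal is correct and follows essentially the same route as the paper: specialize the multiplication formula \eqref{zet} to the two pure arguments, observe that only partitions with cardinality sequence $(i,\dots,i,j)$ survive, and evaluate $N_{(i,\dots,i,j)}$ from \eqref{comb}. The paper carries out that evaluation by writing the telescoping product of binomials directly as a ratio of factorials, whereas you prove the identity $\prod_{r=2}^{m}\binom{ri-1}{i-1}=\frac{(mi)!}{m!\,(i!)^{m}}$ by induction (and offer a combinatorial count), but this is the same computation in a different dress.
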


\begin{proof}
It is enough to compute
\begin{multline*}
N_{(i,\dots,i,j)}=\binom{ni+j-1}{j-1}
\binom{ni-1}{i-1}\dots\binom{2i-1}{i-1}\\
=\frac{(i+1)\dots(2i-1)(2i+1)\dots(ni-1)(ni+1)\dots(ni+j-1)}
{((i-1)!)^{n-1}(j-1)!}
=\frac{(ni+j-1)!}{n!(i!)^n(j-1)!}
\end{multline*}
and to apply Theorem \ref{jet}.
\end{proof}


\section{Group multiplication in $T^kG$}\label{three}

In this section we present the multiplication on the trivialized $k$-th order tangent group $T^kG$ \cite{B}
and we show that the $k$-th order jet group $J^kG$ is isomorphic to the subgroup of fixed points
of $T^kG$ under the obvious action of the permutation group $S_k$.

The structure of higher order tangent groups was investigated in \cite{B} Section 24.
In the right trivialization we have a tower of semidirect products:
\[
T^{k-1}\g\to T^kG=T(T^{k-1}G)\to T^{k-1}G,
\]
where $T^{k-1}\g$ denotes the Lie algebra of $T^{k-1}G$.
Using infinitesimal units $\ep_1,\dots,\ep_k$ to keep track of each extension in the tower,
\ie $T^kG=T^{k-1}G\x \ep_kT^{k-1}\g$,
we get the right trivialization of the $k$-th order tangent bundle $T^kG$ 
\begin{align*}
TG&= G\x\ep_1\g\\
T^2G&= TG\x\ep_2 T\g=G\x\ep_1\g\x\ep_2\g\x\ep_2\ep_1\g
\\
\dots
\\
T^kG&= G\x\oplus_{\al\in I_k^*}\ep^\al\g
\end{align*}
where $I_k$ denotes the power set of $\{1,\dots,k\}$ and $I_k^*=I_k-\{\emptyset\}$.
For any multi-index $\al=\{\al_1,\dots,{\al_n}\}\in I_k^*$ we denote $\ep^\al:=\ep_{\al_n}\dots\ep_{\al_1}$.
In particular $T^k\g=\oplus_{\al\in I_k}\ep^\al\g$ as a vector space.

An alternative notation is used in \cite{B}: the multi-index $\al\in I_k$
is identified with its characteristic function in $\{0,1\}^k$, written as a string of 0's and 1's,
for instance the string corresponding to the multi-index $\{2,3\}\in I_3$ is $(110)$.
We state below the Theorem 24.7 from \cite{B} in our notation.

\begin{theorem}{\rm\cite{B}}\label{gro}
The group multiplication for the $k$-th order tangent group $T^kG$ in the right trivialization is
\[
(g,(\ep^\al x_\al)_{\al\in I_k^*})(h,(\ep^\al y_\al)_{\al\in I_k^*})=(gh,(\ep^\al z_\al)_{\al\in I_k^*})
\]
with
\begin{equation}\label{zal}
z_\al=x_\al+\sum_{\la\in\P(\al)}\ad_{x_{\la_{\ell-1}}}\dots\ad_{x_{\la_1}}\Ad_gy_{\la_\ell}
\end{equation}
where $\P(\al)$ denotes the set of all anti-lexicographically ordered partitions 
of the subset $\al\subset \{1,\dots,k\}$
and $\ell$ is the length of the partition $\la$.
The inverse is given by the formula 
$$
(g,(\ep^\al x_\al)_{\al\in I_k^*})^{-1}=(g^{-1},(\ep^\al w_\al)_{\al\in I_k^*})
$$
with
\begin{equation}\label{wal}
w_\al=\sum_{\la\in\P(\al)}(-1)^\ell\Ad_{g^{-1}}\ad_{x_{\la_{\ell-1}}}\dots\ad_{x_{\la_1}}x_{\la_\ell}.
\end{equation}
\end{theorem}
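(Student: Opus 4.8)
Since the statement is \cite[Theorem 24.7]{B}, the cleanest self-contained route is a proof parallel to that of Theorem \ref{jet}, exploiting the tower of semidirect products $T^kG=T(T^{k-1}G)=T^{k-1}G\ltimes T^{k-1}\g$. The plan is induction on $k$. The base case $k=1$ is the statement that $TG=G\ltimes\g$ in the right trivialization, whose multiplication $(g,v)(h,w)=(gh,v+\Ad_gw)$ is exactly \eqref{zal} for the unique anti-lexicographic partition of $\{1\}$.

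For the inductive step I would read off the multiplication of $T^kG=T(T^{k-1}G)$ from the general shape of a tangent group: writing an element of $TH$ in the right trivialization as $(\eta,V)\in H\x\h$, one has $(\eta_1,V_1)(\eta_2,V_2)=(\eta_1\eta_2,\,V_1+\Ad^H_{\eta_1}V_2)$, where $\Ad^H$ is the adjoint representation of $H=T^{k-1}G$ on $\h=T^{k-1}\g$. The new infinitesimal unit $\ep_k$ splits each multi-index $\al\in I_k^*$ into the two cases $k\notin\al$ (the $T^{k-1}G$-part, handled directly by the inductive hypothesis) and $k\in\al$ (the $T^{k-1}\g$-part, handled by $\Ad^H$). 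The heart of the argument is therefore to establish, by a nested induction, a closed form for $\Ad^{T^{k-1}G}$: for $\delta\subseteq\{1,\dots,k-1\}$ its $\delta$-component should be the same anti-lexicographic partition sum $\ad_{x_{\la_{\ell-1}}}\dots\ad_{x_{\la_1}}\Ad_gv_{\la_\ell}$ as in \eqref{zal}, but now also allowing the last, $v$-carrying block $\la_\ell$ to be \emph{empty} (with $v_\emptyset$ the $\g$-component of $V$). Substituting this expression into $V_1+\Ad^H_{\eta_1}V_2$ and collecting terms is what I expect to reproduce \eqref{zal} for the indices $\al\ni k$.

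The combinatorial bookkeeping is the subset analogue of Remark \ref{back}, and this is the step I expect to be the main obstacle. Because $\ep_k$ is attached to the \emph{largest} index $k$, anti-lexicographic ordering forces the block containing $k$ to be the final block; hence adjoining $k$ to an anti-lex partition $\mu=\mu_1|\dots|\mu_\ell$ of $\delta$ yields $\ell+1$ anti-lex partitions of $\delta\cup\{k\}$, namely $\mu_1|\dots|\mu_\ell|\{k\}$ together with those obtained by enlarging one block $\mu_j$ to $\mu_j\cup\{k\}$ and moving it to the end. This is dual to the jet situation, where the \emph{smallest} new index could be placed in any block freely; the delicate point is precisely this forced reordering, and verifying that the terms produced by $V_1+\Ad^H_{\eta_1}V_2$ land on exactly these partitions, with $\Ad_g$ correctly sitting on the final block, is where care is needed. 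Already $k=2$ warns that one may not simply differentiate the lower formula in the $\ep_k$ direction: the cross term $\ad_{x_{\{1\}}}\Ad_gy_{\{2\}}$ in $z_{\{1,2\}}$ arises from the action of the base Lie-algebra coordinate $x_{\{1\}}$ on the second factor's $\ep_2$-component $y_{\{2\}}$, a contribution that only $\Ad^H$ records.

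Finally, the inversion formula \eqref{wal} would follow by the same induction applied to $\de^r(\eta^{-1})$, exactly as \eqref{wet} was obtained from \eqref{zet} in Theorem \ref{jet}; alternatively it is forced by uniqueness of inverses once \eqref{zal} is known. An entirely parallel and perhaps shorter route is Bertram's direct computation inside the scalar extension $G(\RR[\ep_1,\dots,\ep_k]/(\ep_i^2))$, expanding a generic product and identifying the coefficient of each $\ep^\al$ with the corresponding anti-lexicographic partition sum.
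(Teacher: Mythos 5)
First, note that the paper itself gives no proof of this statement: Theorem \ref{gro} is imported verbatim (in changed notation) from \cite{B}, Theorem 24.7, so there is no in-paper argument to compare yours against. Your plan --- induction along the tower $T^kG=T(T^{k-1}G)=T^{k-1}G\ltimes T^{k-1}\g$, splitting the multi-indices $\al$ according to whether $k\in\al$, and reducing everything to a closed formula for $\Ad^{T^{k-1}G}$ in the right trivialization --- is the right one and is essentially how the result is obtained in \cite{B}. Your combinatorial bookkeeping is also correct: since $k$ is the largest index, anti-lexicographic ordering forces the block containing $k$ to be the last one, so the partitions of $\al\ni k$ are in bijection with pairs consisting of a subset $S=\la_\ell\setminus\{k\}\subseteq\al\setminus\{k\}$ (possibly empty) and an anti-lex partition of the complement; this is exactly the ``final block may be empty'' structure you attribute to $\Ad^{T^{k-1}G}$, and it checks out against the $k=2$ case and against the $T^3G$ example in Section \ref{three}. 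Your observation that this is dual to Remark \ref{back} (largest new index forced to the end, versus smallest new index placed freely) is also accurate.

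The gap is that the entire content of the theorem has been pushed into the asserted closed form for $\Ad^{T^{k-1}G}$, and that lemma is never proved: you state it, predict it is ``the main obstacle,'' and say it would follow ``by a nested induction,'' but you do not carry out that induction, nor the final collection of terms that is supposed to reproduce \eqref{zal}. To close this you would either (i) compute $\Ad^{H}_{\eta}$ for $H=T^{k-1}G$ by differentiating $\zeta\mapsto\eta\zeta\eta^{-1}$ at the identity, using the multiplication \emph{and} inversion formulas already available on $H$ by the outer inductive hypothesis (the multiplication being affine in the second factor makes this tractable but still a genuine partition-counting computation), or (ii) run a second induction through the semidirect-product tower using $\Ad^{H'\ltimes\h'}_{(\eta',V')}(\xi,v)=\bigl(\Ad^{H'}_{\eta'}\xi,\ \Ad^{H'}_{\eta'}v+\ad_{V'}\Ad^{H'}_{\eta'}\xi\bigr)$, which in turn requires an explicit handle on the bracket of $T^{k-2}\g$. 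Neither is hard, but neither is done. The same remark applies to \eqref{wal}: ``forced by uniqueness of inverses'' only helps after you have verified that the proposed $w_\al$ actually multiplies $x$ to the identity, which is itself a signed partition cancellation that must be checked (or derived by the same induction, as in the proof of Theorem \ref{jet}). As it stands the proposal is a correct and well-structured plan, not yet a proof.
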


\begin{remark}
{\rm
The analogues of \eqref{zal} and \eqref{wal} in the left trivialization of $T^kG$ are
\begin{equation}\label{zal2}
z_\al=y_\al+\sum_{\la\in\P(\al)}(-1)^{\ell-1}\ad_{y_{\la_{\ell-1}}}\dots\ad_{y_{\la_1}}\Ad_{h^{-1}}x_{\la_\ell}
\end{equation}
and 
\begin{equation}\label{wal2}
w_\al =-\sum_{\la\in\P(\al)}\Ad_{g}\ad_{x_{\la_{1}}}\dots\ad_{x_{\la_{\ell-1}}}x_{\la_\ell},
\end{equation}
}
\end{remark}

We consider the obvious left action of the symmetric group $S_k$ on $T^kG$ 
by permuting the infinitesimal units $\ep_1,\dots,\ep_k$ \cite{D}:
\begin{equation}\label{act}
\si\cdot (g,(\ep^\al x_\al)_{\al\in I_k^*})=(g,(\ep^{\si(\al)} x_{\al})_{\al\in I_k^*}),\quad\si\in S_k.
\end{equation}
In contrast to the action (24.4) from \cite{B},
this action is not compatible with the group multiplication \eqref{zal}
(a permutation $\si\in S_k$ might perturb the anti-lexicographic ordering of $\la\in\P_n$),
but we still have the following result:

\begin{proposition}\label{first}
With respect to the action of $S_k$ given by \eqref{act}, the fixed point set $(T^kG)^{S_k}$ is a subgroup of the tangent group $T^kG$,
isomorphic to the jet group $J^kG$.
\end{proposition}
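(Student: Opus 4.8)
The plan is to exhibit an explicit bijection $\iota\colon J^kG\to (T^kG)^{S_k}$ and to prove that it is a group homomorphism; both assertions then follow at once, since the image of a homomorphism is automatically closed under multiplication and inversion, hence a subgroup, while $\iota$ itself supplies the isomorphism. This route is forced on us by the remark preceding the statement: the $S_k$-action \eqref{act} is not by automorphisms, so we cannot appeal to the general fact that the fixed point set of a group action is a subgroup, and must instead obtain closure as a by-product of the homomorphism property.

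First I would pin down the fixed point set. Reading off the coefficient of $\ep^\be$ in $\si\cdot(g,(\ep^\al x_\al)_{\al})$ shows it to be $x_{\si^{-1}(\be)}$, so $(g,(x_\al)_\al)$ is fixed by all of $S_k$ precisely when $x_{\si(\al)}=x_\al$ for every $\al\in I_k^*$ and every $\si$. As $S_k$ acts transitively on the subsets of $\{1,\dots,k\}$ of any given cardinality, this means exactly that $x_\al$ depends only on $|\al|$. Thus $(T^kG)^{S_k}$ consists of the elements $(g,(\ep^\al x_{|\al|})_{\al\in I_k^*})$, parametrized by $(g,x_1,\dots,x_k)\in G\x\g^k$, which is exactly the data of the right trivialization \eqref{rtr} of $J^kG$. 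I take $\iota$ to be this parametrization, $\iota(g,x_1,\dots,x_k)=(g,(\ep^\al x_{|\al|})_{\al\in I_k^*})$; it is visibly a bijection of $J^kG$ onto $(T^kG)^{S_k}$.

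The heart of the proof is the homomorphism property, which I would establish by restricting the multiplication law \eqref{zal} of $T^kG$ to fixed points and matching it against \eqref{zet}. Substituting $x_{\la_r}=x_{|\la_r|}$ and $y_{\la_r}=y_{|\la_r|}$ into \eqref{zal}, the summand indexed by an anti-lexicographically ordered partition $\la\in\P(\al)$ of the $n$-element set $\al$ becomes $\ad_{x_{i_{\ell-1}}}\dots\ad_{x_{i_1}}\Ad_g y_{i_\ell}$ with $i_r=|\la_r|$, and therefore depends on $\la$ only through its cardinality profile $(i_1,\dots,i_\ell)$. In particular the resulting $\be$-component depends only on $n=|\be|$, so the product of two fixed points is again a fixed point; this already gives closure. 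Grouping the partitions of the $n$-set $\al$ by cardinality profile and applying Lemma \ref{number} --- whose count $N_{(i_1,\dots,i_\ell)}$ depends only on $n$, not on which $n$-set $\al$ is --- turns the restricted \eqref{zal} into \eqref{zet}. Hence $\iota((g,x)(h,y))=\iota(g,x)\,\iota(h,y)$.

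With $\iota$ a bijection and a homomorphism, the proposition follows: $(T^kG)^{S_k}=\iota(J^kG)$ is a subgroup of $T^kG$ and $\iota$ is a group isomorphism onto it. I expect the delicate point to be the matching step, where one must check that the anti-lexicographic ordering of the blocks of $\la$ --- which dictates the order of the nested brackets $\ad_{x_{i_{\ell-1}}}\dots\ad_{x_{i_1}}$ --- is exactly the structure preserved when the set-indexed components $x_{\la_r}$ collapse to the cardinality-indexed components $x_{i_r}$, so that the combinatorial count of Lemma \ref{number} reproduces \eqref{zet} with the correct coefficients. Note that inverses need no separate treatment: they are subsumed by the homomorphism property, which spares us from comparing the inversion formulas \eqref{wal} and \eqref{wet} directly.
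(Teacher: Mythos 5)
Your proof is correct and follows essentially the same route as the paper's: both identify the fixed points as the elements with $x_\al$ depending only on $|\al|$, restrict the multiplication \eqref{zal} to such elements, and use the cardinality-preserving correspondence between $\P(\al)$ and $\P_{|\al|}$ (equivalently, Lemma \ref{number}) to match the result against \eqref{zet}. The one small economy is that you deduce closure under inversion from the fact that the image of the injective homomorphism $\iota$ is automatically a subgroup, whereas the paper checks it directly by the analogous computation with \eqref{wal} and \eqref{wet}; both are valid.
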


\begin{proof}
Elements $(g,(\ep^\al x_\al)_{\al\in I_k^*})$ of $(T^kG)^{S_k}$ are characterized by $x_\al=x_\be$
for all $\al,\be\in I_k^*$ with the same cardinality,
hence there exist $x_1,\dots,x_n\in\g$ such that $x_\al=x_n$ for ${|\al|}=n$.
Let $(g,(\ep^\al x_\al)_{\al\in I_k^*}),(h,(\ep^\al y_\al)_{\al\in I_k^*})\in  (T^kG)^{S_k}$
with $x_\al=x_{|\al|}$ and $y_\al=y_{|\al|}$.
From the group multiplication \eqref{zal} on $T^kG$ we get
\begin{align*}
z_\al
=x_{|\al|}+\sum_{\la\in\P(\al)}\ad_{x_{|\la_{\ell-1}|}}\dots\ad_{x_{|\la_1|}}\Ad_gy_{|\la_\ell|}.
\end{align*}
For each $\al\in I_k^*$ with $|\al|=n$,
there exists a unique strictly increasing bijection $\ph:\{1,\dots,n\}\to\al=\{\al_1,\dots,\al_n\}$.
It induces canonically a 1-1 correspondence between the sets  $\P_n$ and $\P(\al)$ of
anti-lexicographically ordered partitions, bijection that  preserves the length of the partition as well as 
the cardinality of each subset. 
We get that
\begin{align}\label{long}
z_\al=x_n+\sum_{\la\in\P_n}\ad_{x_{|\la_{\ell-1}|}}\dots\ad_{x_{|\la_1|}}\Ad_gy_{|\la_\ell|},
\end{align}
hence each $z_\al$ depends only on $|\al|=n$, thus showing that 
$$
(g,(\ep^\al x_\al)_{\al\in I_k^*})(h,(\ep^\al y_\al)_{\al\in I_k^*})=(gh,(\ep^\al z_\al)_{\al\in I_k^*})\in (T^kG)^{S_k}.
$$ 
In a similar manner, using \eqref{wal}, one sees that the inverse $(g,(\ep^\al x_\al)_{\al\in I_k^*})^{-1}\in (T^kG)^{S_k}$, so $(T^kG)^{S_k}$ is a subgroup of $T^kG$.

The identification of an element $(g,(\ep^\al x_\al)_{\al\in I_k^*})$ in $(T^kG)^{S_k}$  
with the corresponding element $(g,(x_n)_{1\le n\le k})$ in $J^kG$,
where $x_n=x_\al$ for any $\al\in I_k^*$ with $|\al|=n$,
provides a bijection between these groups.
From the two expressions \eqref{zet} and \eqref{long} of the group multiplication in the right trivialization,
it is easy to see that this bijection is a group isomorphism.
\end{proof}

\begin{example}
{\rm
The multiplication on $T^3G$ is
\begin{gather*}
(e,(\ep^\al x_\al)_{\al\in I_3^*})(e,(\ep^\al y_\al)_{\al\in I_3^*})\\
=(e,\ep^1(x_1+y_1),\ep^2(x_2+y_2),
\ep^{12}(x_{12}+y_{12}+\ad_{x_1}y_2),\\\ep^3(x_3+y_3),
\ep^{13}(x_{13}+y_{13}+\ad_{x_1}y_3),\ep^{23}(x_{23}+y_{23}+\ad_{x_2}y_3),\\
\ep^{123}(x_{123}+y_{123}+\ad_{x_1}y_{23}+\ad_{x_2}y_{13}+\ad_{x_{12}}y_3
+\ad_{x_2}\ad_{x_1}y_3))
\end{gather*}
while the multiplication on $J^3G$ can be written as
\begin{gather*}
(e,x_1,x_2,x_3)(e,y_1,y_2,y_3)\\
=(e,x_1+y_1,x_2+y_2+\ad_{x_1}y_1,
x_3+y_3+2\ad_{x_1}y_2+\ad_{x_2}y_1+\ad_{x_1}^2y_1).
\end{gather*}
We observe that all elements of $T^3G$ can be decomposed in products of pure elements
\begin{gather*}
(e,(\ep^\al x_\al)_{\al\in I_3^*})
=(e,\ep^{123}x_{123})(e,\ep^{23}x_{23})(e,\ep^{13}x_{13}) (e,\ep^3x_3)
 (e,\ep^{12}x_{12})(e,\ep^2 x_2) (e,\ep^1x_1),   
\end{gather*}
a property that holds for all $T^kG$ \cite{B}.
There is no analogous decomposition for $J^3G$:
\[
(e,x_3)(e,x_2)(e,x_1)=(e,x_1,x_2,x_3+\ad_{x_2}x_1]).
\]
}
\end{example}


\paragraph{Jet functor.}
The identity fiber of $T^kG\to G$ is a subgroup of $T^kG$, denoted by $G_k(\g)$ 
and identified with $\oplus_{\al\in I_k^*}\ep^\al\g$,
so its dimension is $(2^k-1)\dim\g$.
The identity fiber $J_k(\g)$ of $J^kG\to G$ is a subgroup  of $J^kG$, 
identified with the direct sum of $k$ copies of $\g$,
so its dimension is $k\dim\g$.

As before we consider the natural action of the symmetric group $S_k$ on $G_k(\g)$ by permuting the infinitesimal units $\ep_1,\dots,\ep_k$:
\[
\si\cdot (\ep^\al x_\al)_{\al\in I_k^*}=(\ep^{\si(\al)} x_{\al})_{\al\in I_k^*},\quad\si\in S_k.
\]
The fixed point set $G_k(\g)^{S_k}$, identified with $J_k(\g)$,
is a subgroup of $G_k(\g)$, 
Elements $(\ep^\al x_\al)_{\al\in I_k^*}$ of the fixed point set are characterized by $x_\al=x_\be$
for all $\al,\be\in I_k^*$ with the same cardinality.

Theorem \ref{gro} resp. Theorem \ref{jet} provide the expressions for the group multiplication 
and the inverse of an element on $G_k(\g)$: 
\begin{gather}\label{mul}
(\ep^\al x_\al)_{\al\in I_k^*}(\ep^\al y_\al)_{\al\in I_k^*}=\left(\ep^\al \left(x_\al+\sum_{\la\in\P(\al)}\ad_{x_{\la_{\ell-1}}}\dots\ad_{x_{\la_1}}y_{\la_\ell}\right)\right)_{\al\in I_k^*}\\
(\ep^\al x_\al)_{\al\in I_k^*}^{-1}=\left(\ep^\al\left(\sum_{\la\in\P(\al)}(-1)^\ell\ad_{x_{\la_{\ell-1}}}\dots\ad_{x_{\la_1}}x_{\la_\ell}\right)\right)_{\al\in I_k^*}\nonumber,
\end{gather}
resp. on $J_k(\g)$: $\left(x_n\right)_{1\le n\le k}\left(y_n\right)_{1\le n\le k}
=\left(z_n\right)_{1\le n\le k}$ and $\left(x_n\right)_{1\le n\le k}^{-1}=\left(w_n\right)_{1\le n\le k}$,
\begin{gather}\label{jul}
z_n=x_n+\sum_{\la\in\P_{n}}\ad_{x_{i_{\ell-1}}}\dots\ad_{x_{i_1}}y_{i_\ell}
=x_n+\sum_{i_1+\dots +i_\ell=n}N_{(i_1,\dots,i_\ell)}\ad_{x_{i_{\ell-1}}}\dots\ad_{x_{i_1}}y_{i_\ell}
\\
w_n=\sum_{\lambda\in\P_n} (-1)^\ell\ad_{x_{i_1}}\dots\ad_{x_{i_{\ell-1}}}x_{i_\ell}=\sum_{i_1+\dots +i_\ell=n}(-1)^\ell N_{(i_1,\dots,i_\ell)} \ad_{x_{i_1}}\dots\ad_{x_{i_{\ell-1}}}x_{i_\ell}
\nonumber,
\end{gather}
where $\la$ is an anti-lexicographically ordered partition $\la_1|\dots|\la_\ell$
of $\al\in I_k^*$, resp. of $\{1,\dots,n\}$, of length $\ell$ and $i_r=|\la_r|$ for $r=1,\dots,\ell$.

The maps sending the Lie algebra $\g$ to the groups $G_k(\g)$ and $J_k(\g)$ are functorial.
Both are polynomial groups \cite{D}.
Moreover, the group multiplication is affine in the second argument,
so we actually get affine near-ring structures, slight generalizations of near-ring structures \cite{P}. 

\paragraph{Leibniz algebras.}
A left Leibniz algebra \cite{L} is a vector space $\g$ endowed with a bilinear map
$[\ ,\ ] : \g \times\g \to\g$  such that the left Leibniz identity holds
\[
[x,[y, z]] = [[x, y], z] + [y, [x, z]],\quad x, y, z\in\g.
\] 
This condition is equivalent to the fact that the left ad-actions are derivations of the bracket.
A Lie algebra is the same as a Leibniz algebra with antisymmetric bracket.

\begin{theorem}\label{didi}{\rm\cite{D}}
For any left Leibniz algebra $\g$, 
the multiplication \eqref{mul} endows $G_k(\g)$ with a group structure. 
Its fixed point set $J_k(\g)$ under the permutation action is a subgroup.
\end{theorem}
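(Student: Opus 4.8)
The plan is to verify the group axioms for the multiplication \eqref{mul} directly on the polynomial group $G_k(\g)$, isolating the one structural fact supplied by a left Leibniz algebra and checking that antisymmetry of the bracket is never invoked. The essential observation is that the left Leibniz identity is equivalent to the derivation property $\ad_a[b,c]=[\ad_a b,c]+[b,\ad_a c]$, hence to the operator identity
\[
\ad_{[a,b]}=[\ad_a,\ad_b]=\ad_a\ad_b-\ad_b\ad_a,\qquad a,b\in\g.
\]
Together with the linearity of $a\mapsto\ad_a$, these relations play, at the purely algebraic level, the role that the product rule for differentiation and for $\Ad_c$ played in the proof of Theorem \ref{jet}; and since the right-hand side of \eqref{mul} is assembled only from $\ad$-operators applied to single elements, they are the only facts that should be needed. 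The identity element is $0=(\ep^\al 0)_{\al\in I_k^*}$: putting $y=0$ annihilates every partition sum, while putting $x=0$ leaves only the length-one partition $\la=\al$, which carries no $\ad$-factor, so $z_\al=y_\al$.

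The heart of the matter, and the main obstacle, is associativity. The geometric argument behind Theorem \ref{jet} inherited associativity for free from the ambient Lie group $T^kG$, a crutch that vanishes for a Leibniz algebra, so a purely algebraic proof is required. I would establish $((xy)z)_\al=(x(yz))_\al$ for every $\al\in I_k^*$ by induction on $|\al|$, organised by the passage from $\P_n$ to $\P_{n+1}$ through derived partitions as in Remark \ref{back}. Expanding both products and using linearity to pull apart each subscript $\ad_{(xy)_\be}$ and the derivation property to distribute the outer $\ad$-factors across each composite entry $(yz)_\be$, one reduces both sides to a common right-normed form, the operator identity $\ad_{[a,b]}=[\ad_a,\ad_b]$ being what disposes of the brackets that arise in the subscripts. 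The surviving terms are indexed by two-step refinements of partitions of $\al$, and the required equality becomes a combinatorial identity among the integers $N_{(i_1,\dots,i_\ell)}$ of Lemma \ref{number}, valid over any ground ring and insensitive to any symmetry of the bracket. Equivalently, and more cleanly, one may reduce to the free left Leibniz algebra on the symbols $\{x_\al,y_\al,z_\al\}$, in which the right-normed bracket monomials form a linear basis; there the two sides expand into finite combinations of basis monomials that one compares coefficient by coefficient. The point that must be watched — and the reason a naive ``pass to the Lie case'' shortcut is illegitimate — is that no step may use $[a,b]=-[b,a]$, since the free Lie algebra satisfies extra relations that would identify distinct Leibniz expressions.

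The inversion formula \eqref{wal} is then verified by the very induction used in the proof of Theorem \ref{jet}: the recursion $E_\la'=\sum_m E_{\la^{[m]}}$ recorded there rests only on linearity of $\ad$ and the derivation property, so it transcribes verbatim with differentiation replaced by the formal passage $\P_n\to\P_{n+1}$, exhibiting \eqref{wal} as a two-sided inverse for \eqref{mul}. Finally, the subgroup assertion for $J_k(\g)=G_k(\g)^{S_k}$ is immediate and field-independent, exactly as in Proposition \ref{first}: the computation \eqref{long} shows, using only \eqref{mul} and the cardinality-preserving bijection $\P_n\cong\P(\al)$, that the product of two permutation-invariant tuples is again permutation-invariant, and reading \eqref{wal} the same way gives the corresponding statement for inverses. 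Hence $J_k(\g)$ is closed under the group operations, is a subgroup of $G_k(\g)$, and carries the induced multiplication \eqref{jul}.
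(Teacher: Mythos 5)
Your treatment of the second assertion --- that $J_k(\g)=G_k(\g)^{S_k}$ is closed under multiplication and inversion --- is correct and is essentially the paper's own argument: the paper simply transcribes the proof of Proposition \ref{first}, using the cardinality-preserving bijection between $\P(\al)$ and $\P_{|\al|}$ to show that each $z_\al$ and $w_\al$ depends only on $|\al|$. For the first assertion, however, the paper supplies no proof at all: the statement that \eqref{mul} defines a group structure on $G_k(\g)$ for a left Leibniz algebra is quoted from \cite{D}. So on this point you are attempting something the paper deliberately outsources, and your attempt must be judged as a self-contained argument.

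Judged that way, it has a genuine gap. For associativity you describe a reduction --- expand both triple products, distribute the outer $\ad$-operators across composite entries via the derivation property, convert $\ad$ of a bracket into a commutator of $\ad$'s, put everything in normed form, and match coefficients --- but the resulting combinatorial identity among the integers $N_{(i_1,\dots,i_\ell)}$ is never stated, let alone verified, and that identity is the entire content of the claim; neither Lemma \ref{number} nor Remark \ref{back} hands it to you. Passing to the free left Leibniz algebra is the right framework but does not by itself produce the coefficient match. The inverse verification is also not the ``verbatim transcription'' you claim: the recursion $E_\la'=\sum_m E_{\la^{[m]}}$ in the proof of Theorem \ref{jet} computes successive derivatives of the curve $\de^r(c^{-1})$ inside an honest Lie group, and in an abstract Leibniz algebra there is no curve $c$ and no inverse $c^{-1}$ to differentiate, so the recursion has nothing to act on. What must actually be checked is that substituting \eqref{wal} into \eqref{mul} makes every component vanish, i.e.\ that a signed sum over concatenations of ordered partitions telescopes --- again a combinatorial statement that your proposal asserts rather than proves. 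In short: your strategy for the first assertion is plausible and more ambitious than anything in the paper, but as written it is a plan for a proof, not a proof; the honest alternatives are to execute the combinatorics or to cite \cite{D} as the paper does.
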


The multiplication formula \eqref{jul} on $J_k(\g)$ holds also for a Leibniz algebra $\g$.


\section{Abelian extension}\label{five}

This section is concerned with the group extension
$\g\to J^kG\to J^{k-1}G$ 
assigned to the right trivialization.
The following result is a direct consequence of Theorem \ref{jet}.

\begin{proposition}\label{csi}
The right trivialized $k$-th order jet bundle $J^{k}G=G\x\g^k$ is an abelian 
Lie group extension of the right trivialized $(k-1)$-th order jet bundle $J^{k-1}G=G\x\g^{k-1}$ by $\g$,
characterized by the group cocycle
\begin{align*}
c_k\left((g,x_1,\dots,x_{k-1}),(h,y_1,\dots,y_{k-1})\right)
&=\sum_{\la\in\P_{k}-\{{12\dots k}\}}\ad_{x_{i_{\ell-1}}}\dots\ad_{x_{i_1}}\Ad_gy_{i_\ell}\\
&=\sum_{\ell=2}^k\sum_{i_1+\dots i_\ell=k}N_{(i_1,\dots,i_\ell)}\ad_{x_{i_{\ell-1}}}\dots\ad_{x_{i_1}}\Ad_gy_{i_\ell}
\end{align*}
with $\la=\la_1|\dots|\la_\ell$ anti-lexicographically ordered and $i_r=|\la_r|$.

Its Lie algebra $J^{k}\g$ is an abelian extension of $J^{k-1}\g$ by $\g$ with Lie algebra cocycle
\[
\si_k\left((\xi,x_1,\dots,x_{k-1}),(\et,y_1,\dots,y_{k-1})\right)
=\sum_{i=1}^{k-1} \binom{k}{i}\left[x_{i},y_{k-i}\right].
\] 
\end{proposition}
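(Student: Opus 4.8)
The plan is to read off both cocycles directly from the multiplication law in Theorem \ref{jet}, since the abelian extension structure is completely encoded in the top component $z_k$ of the product. First I would identify the bundle projection $J^kG\to J^{k-1}G$ as the map that forgets the last Lie algebra slot, $(g,x_1,\dots,x_k)\mapsto(g,x_1,\dots,x_{k-1})$, and the kernel embedding $\g\to J^kG$, $v\mapsto(e,0,\dots,0,v)$, whose image is central in the sense required for an abelian extension. The key observation is that in the formula \eqref{zet} for $z_n$, the component $z_k$ splits as $z_k=x_k+y_k+(\text{terms involving only }x_1,\dots,x_{k-1},y_1,\dots,y_{k-1})$: indeed, in the sum over $\la\in\P_k$, the only partition whose term could involve $x_k$ or $y_k$ is the single-block partition $12\dots k$ (length $\ell=1$), which contributes $\Ad_g y_k$; all partitions of length $\ell\ge 2$ have every block of cardinality $i_r\le k-1$, so they involve only the first $k-1$ Lie algebra arguments.

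The next step is to match this splitting against the standard form of an abelian group extension. A section $s\colon J^{k-1}G\to J^kG$ is given by appending a zero in the last slot, $s(g,x_1,\dots,x_{k-1})=(g,x_1,\dots,x_{k-1},0)$, and the group cocycle is recovered as the failure of $s$ to be a homomorphism, measured in the kernel $\g$. Concretely, I would compute $s(g,x_\bullet)\,s(h,y_\bullet)$ using Theorem \ref{jet} with $x_k=y_k=0$, so that the $\Ad_g y_k$ term drops out and the top component becomes exactly $\sum_{\la\in\P_k,\,\ell\ge2}\ad_{x_{i_{\ell-1}}}\cdots\ad_{x_{i_1}}\Ad_g y_{i_\ell}$, which is precisely $c_k$. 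Since this depends only on $x_1,\dots,x_{k-1}$ and $y_1,\dots,y_{k-1}$ through the adjoint action (never on $x_k,y_k$), it is a well-defined function on $J^{k-1}G\times J^{k-1}G$ with values in $\g$, and the associativity of the multiplication on $J^kG$ forces the $2$-cocycle identity automatically; I would note this rather than verify it by hand.

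For the Lie algebra cocycle I would differentiate. The cleanest route is to regard $\si_k$ as the antisymmetrization of the bilinear part of $c_k$ at the identity, \ie the infinitesimal analogue obtained by linearizing $c_k((g,x_\bullet),(h,y_\bullet))$ at $g=h=e$ and keeping only the bilinear term, then antisymmetrizing. At the identity fiber, using the formula \eqref{jul} with $\Ad_g$ replaced by the identity, the bilinear (in $x$ and $y$) part of $z_k$ comes only from the length-two partitions $\la=\la_1|\la_2$ with $|\la_1|=i$, $|\la_2|=k-i$, each contributing $N_{(i,k-i)}\ad_{x_i}y_{k-i}=N_{(i,k-i)}[x_i,y_{k-i}]$ with $N_{(i,k-i)}=\binom{k-1}{k-i-1}=\binom{k-1}{i}$. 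Antisymmetrizing $\sum_{i=1}^{k-1}\binom{k-1}{i}[x_i,y_{k-i}]$ and using $\binom{k-1}{i}+\binom{k-1}{i-1}=\binom{k}{i}$ (reindexing the second sum by $i\mapsto k-i$) yields $\si_k((\xi,x_\bullet),(\et,y_\bullet))=\sum_{i=1}^{k-1}\binom{k}{i}[x_i,y_{k-i}]$.

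The main obstacle is the bookkeeping in the last step: one must be careful that the $\xi,\et$ (the $\g$-directions tangent to $G$) contribute nothing new to $\si_k$ beyond what the $\Ad_g$ linearization already accounts for, and that the combinatorial identity $\binom{k-1}{i}+\binom{k-1}{k-i}=\binom{k}{i}$ is applied with the correct reindexing so that no off-by-one error creeps into the binomial coefficients. Everything else follows formally from Theorem \ref{jet} and the definition of the characteristic cocycle of an abelian extension, so I would keep the computation at the level of identifying terms rather than re-deriving associativity.
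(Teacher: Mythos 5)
Your proposal is correct and follows essentially the same route as the paper: read $c_k$ off the top component of the multiplication formula \eqref{zet} (the single-block partition supplying the $\Ad_gy_k$ action term), observe that only length-two partitions contribute bilinearly at the identity, count them as $N_{(i,k-i)}=\binom{k-1}{i}$, and antisymmetrize via Pascal's rule to get $\binom{k}{i}$. The only slips are cosmetic: the kernel $\{(e,0,\dots,0,v)\}$ is a normal abelian subgroup on which the quotient acts by $\Ad_g$ (as your own analysis of the $\Ad_gy_k$ term shows), so the extension is abelian but not central, and correspondingly $z_k=x_k+\Ad_gy_k+\cdots$ rather than $x_k+y_k+\cdots$.
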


\begin{proof}
The expression of the group cocycle $c_k$ follows immediately from the multiplication rule \eqref{zet}
rewritten as
\[
z_n=x_n+\Ad_g y_n+\sum_{\la\in\P_{k}-\{{12\dots k}\}}\ad_{x_{i_{\ell-1}}}\dots\ad_{x_{i_1}}\Ad_gy_{i_\ell}.
\]
The Lie algebra cocycle $\si_k$ can be obtained from the group cocycle $c_k$ by derivation:
\begin{equation}\label{sika}
\si_k=\partial_1\partial_2c_k(e,e)-\partial_2\partial_1c_k(e,e).
\end{equation}
We will see that only the terms of the Lie group cocycle that
correspond to partitions $\la\in\P_{k}$ of length two provide non-zero terms 
for the Lie algebra cocycle. 

There are $\binom{k-1}{i}$ anti-lexicographically ordered partitions $\la\in\P_{k}$ of length $2$ 
such that $|\la_1|=i$ and $|\la_2|=k-i$, because the only requirement is that $k\in\la_2$.
Thus, splitting $\P_{k}-\{{12\dots k}\}$ into ordered partitions of length 2 and ordered partitions of length bigger than 2, 
the group cocycle can be written as
\begin{align*}
c_k\left((g,x_1,\dots,x_{k-1}),(h,y_1,\dots,y_{k-1})\right)
=\sum_{i=1}^{k-1}\binom{k-1}{i}\ad_{x_i}\Ad_gy_{k-i}+\sum_{p+q< k}\ad_{x_p}\ad_{x_q}R_{pq}^k,
\end{align*}
where $R_{pq}^k$ is of the form $\ad_{x_{j_m}}\dots\ad_{x_{j_1}}\Ad_gy_{j_0}$ with $m\ge 0$. Then
\begin{equation*}
\partial_1\partial_2c_k(e,e)\left((\xi,x_1,\dots,x_{k-1}),(\et,y_1,\dots,y_{k-1})\right)
=\sum_{i=1}^{k-1}\binom{k-1}{i}\ad_{x_{i}}y_{k-i}
\end{equation*}
and by \eqref{sika} the Lie algebra cocycle is
\begin{multline*}
\si_k\left((\xi,x_1,\dots,x_{k-1}),(\et,y_1,\dots,y_{k-1})\right)
=\sum_{i=1}^{k-1}\binom{k-1}{i}\ad_{x_{i}}y_{k-i}-\sum_{i=1}^{k-1}\binom{k-1}{i}\ad_{y_{i}}x_{k-i}\\
=\sum_{i=1}^{k-1}\left(\binom{k-1}{i}+\binom{k-1}{k-i}\right)\ad_{x_i}y_{k-i}=\sum_{i=1}^{k-1} \binom{k}{i}\left[x_{i},y_{k-i}\right],
\end{multline*}
as requested.
\end{proof}

\begin{remark}
{\rm 
Given two $\g$-modules $V$ and $W$,
each $\g$-invariant element $\ga$ in $\La^2 V^*\otimes W$ determines canonically a Lie algebra 2-cocycle
on the semidirect product $\g\ltimes V$
\[
\si_\ga((\xi,u),(\et,v))=\ga(u,v)\in W.
\]
Its cohomology class in $H^2(\g\ltimes V,W)$ is non-zero.
A special case is the $\g$-invariant element $[\ ,\ ]$ in $\La^2\g^*\otimes\g$,
which determines the Lie algebra 2-cocycle $\si_2$ that characterizes $J^2\g$.
}
\end{remark}

In the right trivialization, the $\ep^{12\dots k}\g$ component of $T^kG$ with addition is a normal abelian subgroup of $T^kG$,
hence we get an abelian Lie group extension
\[
\g\to T^kG\to T^kG/\g.
\]
The  jet group $J^{k-1}G=(T^{k-1}G)^{S_{k-1}}$ is isomorphic to the subgroup $(T^kG/\g)^{S_k}$
of fixed points, since the only multi-index with cardinality $k$, namely $12\dots k$, was divided out.
The pull-back of the abelian extension above by the inclusion $J^{k-1}G\subset T^kG/\g$
is nothing else but the abelian extension
\[
\g\to J^kG\to J^{k-1}G.
\]


CORNELIA VIZMAN, {Department of Mathematics,
West University of Timi\c soara,  Romania, Bd. V. P\^arvan 4, 300223-Timi\c soara, 
\texttt{vizman@math.uvt.ro}

\end{document}

\begin{gather*}
(g,x_1,x_2,x_3)(h,y_1,y_2,y_3)\\
=(gh,x_1+\Ad_gy_1,x_2+\Ad_gy_2+\ad_{x_1}\Ad_gy_1,\\
x_3+\Ad_gy_3+2\ad_{x_1}\Ad_gy_2+\ad_{x_2}\Ad_gy_1+\ad_{x_1}^2\Ad_gy_1).
\end{gather*}
We observe that all elements of $T^3G$ can be decomposed in products of pure elements
\begin{gather*}
(e,(\ep^\al x_\al)_{\al\in I_3^*})
=(e,\ep^{123}x_{123})(e,\ep^{23}x_{23})(e,\ep^{13}x_{13}) (e,\ep^3x_3)
 (e,\ep^{12}x_{12})(e,\ep^2 x_2) (e,\ep^1x_1),   
\end{gather*}

\begin{align*}
(g,x_1)(h,y_i)=(gh,x_1,\Ad_gy_i,i\ad_{x_1}\Ad_gy_i,\frac{i(i+1)}{2}\ad_{x_1}^2\Ad_gy_i,\dots,\binom{n+i-1}{n}\ad_{x_1}^{n}\Ad_gy_i).
\end{align*}


\begin{proof}
The proof is very similar to the proof of Theorem \ref{first}, since elements $(\ep^\al x_\al)_{\al\in I_k^*}$ of $G_k(\g)^{S_k}=J_k(\g)$ are characterized by  $x_\al=x_{|\al|}$.
Let $(\ep^\al x_\al)_{\al\in I_k^*},(\ep^\al y_\al)_{\al\in I_k^*}\in J_k(\g)$.
From the group multiplication \eqref{mul} on $G_k(\g)$ we get $(\ep^\al x_\al)_{\al\in I_k^*}(\ep^\al y_\al)_{\al\in I_k^*}=(\ep^\al z_\al)_{\al\in I_k^*}$ with
\[
z_\al
=x_{|\al|}+\sum_{\la\in\P_{|\al|}}\ad_{x_{|\la_{\ell-1}|}}\dots\ad_{x_{|\la_1|}}y_{|\la_\ell|}.
\] 
We use again the bijection between  $\P(\al)$ and  $\P_{|\al|}$
that preserves the length of the partition as well as the cardinality of each subset. 
Similarly, using \eqref{inv}, one gets $(\ep^\al x_\al)_{\al\in I_k^*}^{-1}=(\ep^\al w_\al)_{\al\in I_k^*}$ with
\[
w_\al=\sum_{\la\in\P_{|\al|}}(-1)^\ell\ad_{x_{\la_{\ell-1}}}\dots\ad_{x_{\la_1}}x_{\la_\ell}.
\] 
Hence all $z_\al$ and $w_\al$ depend only on $|\al|$, thus  $J_k(\g)\subset G_k(\g)$ is closed under
multiplication and under taking the inverse.
\end{proof}